\newtheorem{theorem}{\textbf{Theorem}}[section]
\newtheorem{lemma}{\textbf{Lemma}}[section]
\newtheorem{proposition}{\textbf{Proposition}}[section]
\newtheorem{corollary}{\textbf{Corollary}}[section]
\newtheorem{remark}{\textbf{Remark}}[section]
\newtheorem{definition}{\textbf{Definition}}[section]
\def\be{\begin{equation}}
\def\ee{\end{equation}}
\def\bea{\begin{eqnarray}}
\def\eea{\end{eqnarray}}
\def\bt{\begin{theorem}}
\def\et{\end{theorem}}
\def\bl{\begin{lemma}}
\def\el{\end{lemma}}
\def\br{\begin{remark}}
\def\er{\end{remark}}
\def\bp{\begin{proposition}}
\def\ep{\end{proposition}}
\def\bc{\begin{corollary}}
\def\ec{\end{corollary}}
\def\bd{\begin{definition}}
\def\ed{\end{definition}}
\def\non{\nonumber }
\begin{document}

\title{Finite--dimensional global attractor for a system \\modeling the
$2D$ nematic liquid crystal flow}

\author{
{\sc Maurizio Grasselli}\\
Dipartimento di Matematica,
Politecnico di Milano\\
Milano 20133, Italy\\ {\it
maurizio.grasselli@polimi.it}\\
and\\
{\sc Hao Wu}\\ Shanghai Key Laboratory for Contemporary Applied
Mathematics \\
School of Mathematical
Sciences, Fudan University\\Shanghai 200433, China\\
\textit{haowufd@yahoo.com}}

\date{\today}

\maketitle


\begin{abstract}
We consider a $2D$ system that models the nematic liquid crystal
flow through the Navier--Stokes equations suitably coupled with a
transport-reaction-diffusion equation for the averaged molecular
orientations. This system has been proposed as a reasonable
approximation of the well-known Ericksen--Leslie system. Taking
advantage of previous well-posedness results and proving suitable
dissipative estimates, here we show that the system endowed with
periodic boundary conditions is a dissipative dynamical system with
a smooth global attractor of finite fractal dimension.
\medskip

\noindent \textbf{Keywords}: Liquid crystal flow, kinematic transport, global attractors, finite fractal dimension. \\
\textbf{AMS Subject Classification}: 35B41, 35Q35, 76A15, 76D05.
\end{abstract}

\section{Introduction}
\label{intro}
We consider the following hydrodynamical system that models the
nematic liquid crystal flows (cf. e.g., \cite{LLZ07, SL08, dG, E61})
 \bea
 && v_t+v\cdot\nabla v-\nu \Delta v+\nabla P\non\\
 && \ =-\lambda
 \nabla\cdot[\nabla d\odot\nabla d+\alpha(\Delta d-f(d))\otimes d-(1-\alpha)d\otimes (\Delta d-f(d)) ],\label{1}\\
 && \nabla \cdot v = 0,\label{2}\\
 && d_t+(v\cdot\nabla) d-\alpha  (\nabla v) d+(1-\alpha)(\nabla^T v) d =\gamma(\Delta d-f(d)),\label{3}
 \eea
in $Q\times(0,\infty)$. Here, $Q$ is a unit square in $\mathbb{R}^2$
(the more general case $Q=\Pi_{i=1}^2(0,L_i)$ with different periods
$L_i$ in different directions can be treated in a similar way). The
state variables $v$, $d$ and $P$ represent, respectively, the
velocity field of the flow, the averaged macroscopic/continuum
molecular orientations in $\mathbb{R}^2$ and the hydrodynamic
pressure. The positive constants $\nu, \lambda$ and $\gamma$ stand
for viscosity, the competition between kinetic energy and potential
energy, and macroscopic elastic relaxation time (Deborah number) for
the molecular orientation field, respectively. The parameter
$\alpha\in [0,1]$ is related to the shape of the liquid crystal
molecule. The symbol $\nabla d\odot \nabla d$ denotes the $2\times
2$ matrix whose $(i,j)$-th entry is given by $\nabla_i d\cdot
\nabla_j d$, for $1\leq i,j\leq 2$. $\otimes$ is the usual Kronecker
product, e.g., $(a\otimes b)_{ij}=a_ib_j$ for $a,b \in
\mathbb{R}^2$. $f(d)=\frac{1}{\eta^2}(|d|^2-1)d: \mathbb{R}^2\mapsto
\mathbb{R}^2$ with $\eta\in (0, 1]$ can be seen as a penalty
function to approximate the strict unit-length constraint $|d| = 1$,
which is due to liquid crystal molecules being of similar size (cf.
\cite{LL95}). This approximation fits well with the general theory
of Landau's order parameter (cf. \cite{Le79}) and the
Ginzburg--Landau type energy is also consistent with the model on
variable degree of orientation (cf. \cite{E91}). It is obvious that
$f(d)$ is the gradient of the scalar valued function
$F(d)=\frac{1}{4\eta^2}(|d|^2-1)^2:\mathbb{R}^2\mapsto \mathbb{R}$.

In the present paper, we consider system \eqref{1}--\eqref{3}
subject to the periodic boundary conditions
 \be
v(x+e_i)=v(x),\quad d(x+e_i)= d(x),\qquad \text{for}\ x\in
\mathbb{R}^2,
 \label{4}
 \ee
 where unit vectors $e_i \ (i=1,2)$ are the canonical basis of
 $\mathbb{R}^2$. Namely, $v,d$ are well
defined in the $2$-dimensional torus
$\mathbb{T}^2=\mathbb{R}^2/\mathbb{Z}$. Besides, we have the initial conditions
 \be
 v|_{t=0}=v_0(x) \ \ \text{with}\ \nabla\cdot v_0=0,\quad
 d|_{t=0}=d_0(x),\qquad \text{for}\ x\in Q.\label{5}
 \ee
Well-posedness issues for problem \eqref{1}--\eqref{5} for
$\alpha\in [0,1]$ have been studied in \cite{WXL1} (see also
\cite{SL08} for the case $\alpha=1$ and \cite{LL01, WXL2} for the
general Ericksen--Leslie model). More recently, the existence of a
global weak solution has been proven in \cite{CR} with boundary
conditions which are not necessarily periodic. Prior to these results, a
number of papers (see, e.g., \cite{CGR06,
CGM09,FO09,GRR09,HW09,LS01,LWa00,LWa02}) have been devoted
to the theoretical and numerical analysis of the highly simplified
system studied first in \cite{LL95}. In this case, the liquid crystal
molecules are assumed to be "small" enough so that kinematic
transport is neglected. However, such an assumption is physically
questionable. On the contrary, system \eqref{1}--\eqref{3} accounts
for the kinematic transport and also preserves dissipative properties
expressed by a \emph{basic energy law} similar to \cite{LL95}
(compare with \cite{CS}). Indeed, letting $(v, d)$ be a classical
solution to problem \eqref{1}--\eqref{5}. Multiplying equation
\eqref{1} with $v$, equation \eqref{3} with $-\lambda(\Delta
d-f(d))$, adding them together and integrating over $Q$, we get (cf.
also \cite{LLZ07})
\[\frac{1}{2}\frac{d}{dt}\int_{Q}\left(|v|^2+\lambda|\nabla d|^2+2\lambda F(d)\right) dx
=-\int_{Q}\left(\nu|\nabla v|^2+\lambda\gamma|\Delta d-f(d)|^2
\right)dx. \] Taking advantage of this dissipative feature, in
\cite{WXL1} it has also been proven that a given solution converges
to a single stationary state and an estimate of the convergence rate
has been obtained (cf. \cite{LW08} for the simplified model). Here
we want to show that the dissipative dynamical system associated
with problem \eqref{1}--\eqref{5} possesses a global attractor with
finite fractal dimension (see \cite{B,S01} for the simplified
model). Our argument is slightly nonstandard since we do not know
whether the semigroup defined through the global solution of problem
\eqref{1}--\eqref{5} is strongly continuous on the phase space.
Thus, we achieve our goal by observing that the semigroup is closed
in the sense of \cite{PZ07}.

The plan of the paper goes as follows. In the next section we introduce the
functional setup, we recall the well-posedness results established
in \cite{WXL1} and we state the main theorem. Section \ref{diss} is
devoted to prove a number of dissipative estimates that entail the
existence of smooth compact absorbing sets in the phase space. This
will yield the existence of the global attractor. Finally, in
Section \ref{attr}, we prove the finite dimensionality of the global
attractor.


\section{Preliminaries and Main Result}
\label{prel}
\setcounter{equation}{0}
 We recall the well-established functional
setting for periodic boundary value problems (cf. e.g.,
\cite[Chapter 2]{Te}, also \cite{SL08}):
 \bea
H^m_p(Q)&=&\left\{v\in H^m(\mathbb{R}^2,\mathbb{R}^2)\ |\ v(x+e_i)=v(x)\right\},\non\\
 \dot{H}^m_p(Q) &=& H^m_p(Q)\cap \left\{v:\ \int_Qv(x)dx=0\
\right\},\non\\
H&=&\left\{v\in L^2_p(Q,\mathbb{R}^2) ,\ \nabla\cdot v=0\right\},\ \
\text{where}\ L^2_p(Q,\mathbb{R}^2)=H^0_p(Q),
\non\\
V&=&\{v\in \dot{H}^1_p(Q),\ \nabla\cdot v=0\},\non\\
V'&=&\text{the\ dual space of\ } V.\non
 \eea
 For the sake of simplicity, we denote the inner product on
$L^2_p(Q,\mathbb{R}^2)$ as well as  $H$ by $(\cdot,\cdot)$ and the
associated norm by $\|\cdot\|$. The space $H^m(Q,\mathbb{R}^2)$ will be shorthanded by $H^m$ and the $H^m$-inner
product ($m\in \mathbb{N}$) can be given by $ \langle v,
u\rangle_{H^m}=\sum_{|\kappa|=0}^m(D^\kappa v, D^\kappa u)$, where
$\kappa=(\kappa_1,..., \kappa_n)$ is a multi-index of length
$|\kappa|=\sum_{i=1}^n\kappa_i$ and
$D^\kappa=\partial_{x_1}^{\kappa_1},...,\partial_{x_n}^{\kappa_n}$.
For any $m\in \mathbb{N}$, $m\geq 2$, we recall
 the interior elliptic estimate, which states that for any $U_1\subset\subset U_2$
 there is a constant $C>0$ depending only on $U_1$ and $U_2$ such that
 $\|d\|_{H^m(U_1)}\leq C(\|\Delta d\|_{H^{m-2}(U_2)}+\|d\|_{L^2(U_2)})$. In our
 case, we can choose $Q'$ to be the union of $Q$ and its
 neighborhood copies. Then we have
 \be
 \|d\|_{H^m(Q)}\leq C\left(\|\Delta d\|_{H^{m-2}(Q')}+\|d\|_{L^2(Q')}\right)= 9C\left(\|\Delta
 d\|_{H^{m-2}(Q)}+\|d\|_{L^2(Q)}\right).\label{dh2}
 \ee
 Following \cite{Te}, one can define mapping $S$ (Stokes operator in
 the periodic case)
 \be S u=-\Delta u, \quad  \forall\ u\in D(S):=\{u\in H, \Delta u\in H\}=\dot H^2_p\cap
 H.\label{stokes}
 \ee
 The operator $S$ can be seen as an unbounded
positive linear self-adjoint operator on $H$. If $D(S)$ is endowed
with the norm induced by $\dot H^0_p(Q)$, then $S$ becomes an
isomorphism from $D(S)$ onto $H$. More detailed properties of
operator $S$ can be found in \cite{Te}.

We shall denote by $C$ the genetic constants depending on $\lambda,
\gamma, \nu, Q, f$ and the initial data. Special dependence will be
pointed out explicitly in the text if necessary. Since the
parameters $\nu$, $\lambda$ and $\gamma$ do not play important roles
in the proof when $n=2$, we just set $\nu=\lambda=\gamma=1$ in the
remaining part of the paper.

We now report the global existence of strong/classical solutions to
problem \eqref{1}--\eqref{5} for $\alpha\in [0,1]$ proven in
\cite{WXL1} (see \cite{SL08} for the special case $\alpha=1$).
 \bp\label{glo}
Assume $n=2$. For any initial data $(v_0, d_0)\in V\times H^2_p(Q)$,
 problem \eqref{1}--\eqref{5} admits a unique global
 solution such that
 \be v\in L^\infty(0, \infty; V)\cap L^2_{loc}(0,\infty; D(S)),\quad d\in
 L^\infty(0,\infty; H^2)\cap L^2_{loc}(0,\infty; H^3),\label{so}
 \ee
 Moreover, $v(t), d(t)\in C^\infty(Q)$ for all
 $t>0$.
 \ep
In \cite{WXL1}, the following continuous dependence result on the
initial data has also been obtained.
 \bl \label{cd}
 Suppose that $(v_i,d_i)$ are global solutions to the problem
 \eqref{1}--\eqref{5} corresponding to the initial data $(v_{0i}, d_{0i})\in V\times
 H^2_p(Q)$, $i=1,2$, respectively. Moreover, assume that the following estimate
 holds for any $T>0$
 \be
 \|v_i(t)\|_{V}+\|d_i(t)\|_{H^2}\leq M ,\quad \forall \,
 t\in [0,T].\label{BD}
 \ee
 Then for any $ t\in [0,T]$, we have
\begin{eqnarray}
&& \| (v_1-v_2)(t)\|^2+\|(d_1-d_2)(t)\|_{H^1}^2 +
\int_0^t(\|\nabla
 (v_1-v_2)(\tau)\|^2+\|\Delta( d_1-d_2)(\tau)\|^2) d\tau \non\\
 &\leq&  2e^{Ct}\left(\|v_{01}-v_{02}\|^2+\|d_{01}-d_{02}\|_{H^1}^2\right),
 \label{Lip}
\end{eqnarray}
where $C$ is a constant depending on $M$ but not on $t$.
 \el
Therefore, problem \eqref{1}--\eqref{5} has a unique strong
solution.  On account of the stated results, we are able to define a
semigroup $\Sigma(t)$ by setting $(v(t),d(t))=\Sigma(t)(v_0, d_0)$,
for all $t\geq 0$ and for any $(v_0, d_0)\in V\times H^2_p$, where
$(v,d)$ is the solution to \eqref{1}-\eqref{5}.

 \br We are not able to prove that $\Sigma(t)$ is Lipschitz continuous from $V\times
H^2_p$ to $V\times H^2_p$. However, thanks to \eqref{Lip},
$\Sigma(t)$ turns out to be a closed semigroup in the sense of
\cite{PZ07}.
 \er

 The dynamical system $(V\times H^2_p, \Sigma(t))$
is a gradient system since it has a global Lyapunov functional
 \be \mathcal{E}(t)=\frac{1}{2}\|v(t)\|^2+\frac{1}{2}\|\nabla
 d(t)\|^2+ \int_Q F(d(t))dx,\label{Ly}
 \ee
 which satisfies the following \textit{basic energy law} (cf.
 \cite{SL08,LLZ07})
 \be
 \frac{d}{dt}\mathcal{E}(t)=- \|\nabla v(t)\|^2- \|\Delta
 d(t)-f(d(t))\|^2,
 \quad \forall \, t\geq 0.\label{ENL}
 \ee

The main result of this paper is as follows:

 \bt \label{main} $(V\times H^2_p, \Sigma(t))$  possesses a connected
global attractor $\mathcal{A}$ with finite fractal dimension that is bounded in $(V\cap H_p^s)\times H_p^{s+1}$ for any given $s\in \mathbb{N}$,
$s\geq 2$.
 \et

\br Due to the existence of global Lyapunov functional
$\mathcal{E}$, a well-known result entails that $\mathcal{A}$
coincides with the unstable manifold of the set of equilibria (cf.
e.g., \cite{Z04}) . Moreover, on account of Proposition  \ref{glo}
(see also Proposition \ref{compabs}), we have that
$\mathcal{A}\subset (C^\infty(Q))^2$. \er

\section{Dissipative Estimates}
\label{diss}
\setcounter{equation}{0}

We begin to prove the first basic dissipative inequality that is a direct consequence of the \emph{basic energy law} \eqref{ENL}.

\bl \label{bdE1} There exist constants $C_0>0, \kappa>0$
independent of initial data $(v_0, d_0)$ such that \be
 \frac{d}{dt}\mathcal{E}(t)+\kappa\mathcal{E}(t)\leq C_0, \quad \forall \, t\geq 0.\label{DE1}
 \ee
\el
\begin{proof}
Set
 \be
 g=-\Delta d+f(d)\label{eq1}.
 \ee
Multiplying \eqref{eq1} by $d$, integrating over $Q$ and using the periodic boundary condition, we get
 \be
 \frac12\|\nabla d\|^2+\int_Q(|d|^4-|d|^2)dx=\int_Q g\cdot d\,
 dx.\non
 \ee
By the Young inequality, we have
 \be
 \int_Q |d|^2 dx\leq \frac13 \int_Q |d|^4 dx+ \frac34|Q|.\label{you1}
 \ee
Then it follows that
\be
\frac12\|\nabla d\|^2+\int_Q |d|^4dx\leq \frac12\|g\|^2+\frac32\|d\|^2\leq \frac12\int_Q|d|^4dx +\frac12\|g\|^2
+ \frac98 |Q|. \label{you2}
\ee
Thus, we obtain
 \be
  \frac12\|\nabla d\|^2+\int_QF(d)dx\leq \|\nabla d\|^2+\int_Q |d|^4dx\leq \frac94|Q|+\|-\Delta
  d+f(d)\|^2.\non
  \ee
On the other hand, we have the Poincar\'e  inequality for $v\in V$ that
$\|v\|\leq C_P\|\nabla v\|$. As a result, we can see that there exist
constants $C_0>0, \kappa>0$ independent of initial data $(v_0,
d_0)$ such that
 \be
 \kappa \mathcal{E}(t)\leq \|\nabla v\|+ \|-\Delta d+f(d)\|^2+C_0,\non
 \ee
 which together with \eqref{ENL} yields \eqref{DE1}.
\end{proof}

For any $R>0$ and $(v_0, d_0)\in V\times H^2_p$ satisfying
 \be
 \|v_0\|^2_{H^1}+\|d_0\|^2_{H^2}\leq R,\non
 \ee
it is easy to infer from Lemma \ref{bdE1} the following
 \begin{proposition}
 There exists a time $t_0=t_0(R)$ and positive constants $M_1, M_2$ independent of $R$ such that
 \be
  \|v(t)\|^2+\|d(t)\|^2_{H^1} \leq M_1, \quad \forall\, t\geq t_0, \label{low1}
 \ee
 and
 \be
 \int_{t}^{t+1} \left(\|v(\tau)\|_{H^1}^2+\|d(\tau)\|_{H^2}^2\right) d\tau \leq M_2, \quad \forall\, t\geq t_0.\label{low2}
 \ee
 \end{proposition}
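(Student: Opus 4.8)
The plan is to start from the two consequences of Lemma~\ref{bdE1}: a uniform-in-time bound on the Lyapunov functional $\mathcal{E}(t)$ for $t$ large, and the integral dissipation furnished by the basic energy law \eqref{ENL}. Specifically, integrating the differential inequality \eqref{DE1} via Gronwall gives $\mathcal{E}(t)\le \mathcal{E}(0)e^{-\kappa t}+\frac{C_0}{\kappa}(1-e^{-\kappa t})$, so since $\mathcal{E}(0)$ is controlled by $R$ through the definition \eqref{Ly} of $\mathcal{E}$ (the initial bound $\|v_0\|_{H^1}^2+\|d_0\|_{H^2}^2\le R$ controls $\|v_0\|^2$, $\|\nabla d_0\|^2$, and $\int_Q F(d_0)\,dx$ via the embedding $H^2\hookrightarrow L^4$ in $2D$), there is a time $t_0=t_0(R)$ after which $\mathcal{E}(t)\le \frac{2C_0}{\kappa}=:M_0$, a bound independent of $R$. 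This is the entry point for both estimates.

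For \eqref{low1}, I would extract $\|v(t)\|^2$ and $\|\nabla d(t)\|^2$ directly from $\mathcal{E}(t)\le M_0$, using $F(d)\ge 0$ to drop the potential term, which controls $\|v\|^2+\|\nabla d\|^2$. The remaining piece is $\|d(t)\|^2$ itself (not just the gradient). Here I would revisit the algebraic estimates inside the proof of Lemma~\ref{bdE1}: inequality \eqref{you2} shows $\int_Q|d|^4\,dx\le \|g\|^2+\tfrac94|Q|$ where $g=-\Delta d+f(d)$, and by interpolation (Young's inequality) $\|d\|^2\le \frac13\int_Q|d|^4\,dx+\frac34|Q|$. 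Since $\int_Q F(d)\,dx\le M_0$ already bounds $\int_Q(|d|^2-1)^2\,dx$, and this together with $|Q|=1$ controls $\int_Q|d|^4\,dx$ and hence $\|d\|^2$, one obtains $\|d(t)\|^2_{L^2}\le C(M_0)$. Combining, $\|v(t)\|^2+\|d(t)\|^2_{H^1}\le M_1$ for all $t\ge t_0$ with $M_1$ depending only on $M_0$, i.e.\ independent of $R$.

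For \eqref{low2}, the natural route is to integrate the basic energy law \eqref{ENL} over $[t,t+1]$: this yields $\int_t^{t+1}\big(\|\nabla v\|^2+\|\Delta d-f(d)\|^2\big)\,d\tau=\mathcal{E}(t)-\mathcal{E}(t+1)\le M_0$ for $t\ge t_0$, since $\mathcal{E}$ is nonincreasing and bounded below by $0$ (using $F\ge 0$). By the Poincar\'e inequality on $V$, $\|\nabla v\|^2$ already controls $\|v\|_{H^1}^2$. The subtlety is converting $\int\|\Delta d-f(d)\|^2$ into $\int\|d\|_{H^2}^2$. I would expand $\|\Delta d-f(d)\|^2=\|\Delta d\|^2-2(\Delta d,f(d))+\|f(d)\|^2$ and handle the cross term by integration by parts, $-2(\Delta d,f(d))=2\int_Q f'(d)\nabla d:\nabla d\,dx$; the structure of $f(d)=(|d|^2-1)d$ makes the dominant contribution of $f'$ nonnegative up to a lower-order term absorbable by the $L^4$ and $H^1$ bounds from \eqref{low1}. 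Together with the elliptic estimate \eqref{dh2} relating $\|d\|_{H^2}$ to $\|\Delta d\|_{L^2}$ and $\|d\|_{L^2}$, this gives $\int_t^{t+1}\|d\|_{H^2}^2\,d\tau\le M_2$.

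The main obstacle is the control of $\int_t^{t+1}\|\Delta d\|^2\,d\tau$: the quantity supplied by the energy law is $\|\Delta d-f(d)\|^2$, and recovering $\|\Delta d\|^2$ from it requires bounding the cross and nonlinear terms involving $f(d)$. The delicate point is that $\|f(d)\|_{L^2}$ and the cross term bring in $\|d\|_{L^6}$-type norms, which in $2D$ are controlled by $\|d\|_{H^1}$ through the Ladyzhenskaya/Gagliardo--Nirenberg inequalities and hence by the uniform bound \eqref{low1} already established. Thus the ordering matters: \eqref{low1} must be proved first and then fed into the estimate of the nonlinear terms in \eqref{low2}. Once these lower-order terms are absorbed, the passage from $\|\Delta d-f(d)\|^2$ to $\|\Delta d\|^2$ is completed, and \eqref{dh2} closes the argument.
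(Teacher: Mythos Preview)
Your proposal is correct and follows essentially the same path as the paper: Gronwall on \eqref{DE1} to get $\mathcal{E}(t)\le \frac{2C_0}{\kappa}$ after a time $t_0(R)$, extraction of $\|v\|^2+\|d\|_{H^1}^2$ from the bound on $\mathcal{E}$ (the $\|d\|^2$ piece coming from the $F(d)$ bound together with \eqref{you1}), and integration of \eqref{ENL} over $[t,t+1]$ for the second estimate. The only difference is in the final conversion $\|\Delta d-f(d)\|\rightsquigarrow\|d\|_{H^2}$: the paper simply uses the triangle inequality $\|\Delta d\|\le \|\Delta d-f(d)\|+\|f(d)\|$ together with $\|f(d)\|\le C(\|d\|_{H^1}^3+\|d\|_{H^1})$ in $2D$ (this is exactly \eqref{dH2}), whereas you expand the square and integrate by parts to exploit the sign of $f'(d)$. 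Your route works but is more effort than needed; the direct triangle-inequality bound avoids any discussion of the sign structure of $f'$.
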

 \begin{proof}
 It easily follows from \eqref{DE1} that
 \be
 \mathcal{E}(t)\leq \mathcal{E}(0)e^{-\kappa t}+\frac{C_0}{\kappa}, \quad \forall\, t\geq
 t_0.\non
 \ee
 Taking
 $$t_0:=\frac{1}{\kappa}|\ln \mathcal{E}(0)+\ln \kappa-\ln C_0|,$$
 we have for all $t\geq t_0$,
 \be
 \mathcal{E}(t)\leq \frac{2C_0}{\kappa}.\non
 \ee
 This and \eqref{you1}, \eqref{you2} implies that there exists $M_1$ independent of $R$ such that \eqref{low1} holds.
 Next, from \eqref{ENL} we see that
 \be
 \int_{t}^{t+1} (\|\nabla v(\tau)\|^2+\|-\Delta d(\tau)
 + f(d(\tau))\|^2) d\tau \leq \mathcal{E}(t)\leq  \frac{2C_0}{\kappa},\quad \forall\, t\geq t_0.\label{intA}
 \ee
 Using \eqref{low1} and the following inequality
 \be
 \|d\|_{H^2}\leq C(\|\Delta d\|+\|d\|)\leq C\left(\|-\Delta d+f(d)\|+ \|d\|_{H^1}^3+\|d\|_{H^1}\right), \label{dH2}
 \ee
 we obtain the existence of $M_2$ such that \eqref{low2} holds.
 \end{proof}
 Let us now set
 \be
 A(t)=\|\nabla v(t)\|^2+ \|\Delta d(t)-f(d(t))\|^2.\label{A}
 \ee
 In \cite{WXL1} the authors proved a higher-order differential inequality uniform in $\alpha\in [0,1]$, namely,
 \bl \label{he2dd} The following inequality holds for the
 classical solution $(v,d)$ to problem \eqref{1}--\eqref{5}:
 \be
 \frac{d}{dt}A(t)+\|\Delta v(t)\|^2+\|\nabla(\Delta d(t)-f(d(t)))\|^2\leq C \,[A^2(t)+A(t)], \quad \forall
 \, t\geq 0,\label{he}
 \ee
 where $C$ is a constant depending on $\|v_0\|, \|d_0 \|_{H^1}$ and is independent of $\alpha$.
 \el
 Using \eqref{low1}, we can immediately deduce
 \bl \label{ahe2d} The inequality \eqref{he} holds for the
 classical solution $(v,d)$ to problem \eqref{1}--\eqref{5} for $t\geq t_0$, with $C$ being a constant that only depends on $M_1$.
 \el
 As a result, we have
 \begin{proposition}\label{abs1}
 There exists positive constants $M_3, M_4$ independent of $R$ such that for all $t_1:=t_0+1$, the following uniform estimates hold
 \bea
 && \|v(t)\|_{H^1}^2+\|d(t)\|^2_{H^2} \leq M_3, \quad \forall\, t\geq t_1, \label{high1}
 \\
 &&
 \int_{t}^{t+1} \left(\|v(\tau)\|_{H^2}^2+\|d(\tau)\|_{H^3}^2\right) d\tau \leq M_4, \quad \forall\, t\geq t_1.\label{high2}
 \eea
 \end{proposition}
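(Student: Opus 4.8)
The plan is to apply the uniform Gronwall lemma to the differential inequality \eqref{he} supplied by Lemma \ref{ahe2d}, using the fact that the time integral of $A(t)$ is already under control thanks to the energy law. The decisive observation is that the quantity $A(t)=\|\nabla v(t)\|^2+\|\Delta d(t)-f(d(t))\|^2$ from \eqref{A} is exactly the integrand in \eqref{intA}, so that
\[
\int_t^{t+1}A(\tau)\,d\tau\leq\frac{2C_0}{\kappa}=:a_1,\qquad\forall\,t\geq t_0.
\]
I would then rewrite \eqref{he} in the form $\frac{d}{dt}A\leq\big(C(A+1)\big)A$, i.e. $y'\leq gy$ with $y=A$ and $g=C(A+1)$, where by Lemma \ref{ahe2d} the constant $C$ depends only on $M_1$. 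Since $\int_t^{t+1}g\,d\tau\leq C(a_1+1)$ and $\int_t^{t+1}y\,d\tau\leq a_1$ hold uniformly for $t\geq t_0$, the uniform Gronwall lemma (cf. \cite[Chapter III]{Te}) gives
\[
A(t+1)\leq a_1\,e^{C(a_1+1)},\qquad\forall\,t\geq t_0,
\]
hence $A(t)\leq M$ for every $t\geq t_1=t_0+1$, with $M$ depending only on $M_1,C_0,\kappa$ and therefore independent of $R$.

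Next I would convert this bound into \eqref{high1}. The velocity part is immediate, since $\|\nabla v(t)\|^2\leq A(t)\leq M$ together with \eqref{low1} controls $\|v\|_{H^1}^2$. For the director field I would invoke the elliptic estimate \eqref{dH2}, writing $\|\Delta d\|\leq\|\Delta d-f(d)\|+\|f(d)\|$ and estimating $\|f(d)\|\leq C(\|d\|_{H^1}^3+\|d\|_{H^1})$ by the Sobolev embedding $H^1\hookrightarrow L^6$ in dimension two; by \eqref{low1} this last contribution is controlled by $M_1$, so that $\|d\|_{H^2}^2\leq M_3$.

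For the integral estimate \eqref{high2} I would integrate \eqref{he} over $[t,t+1]$ for $t\geq t_1$ and absorb the right-hand side using $A\leq M$ (whence $A^2\leq MA$) together with the integral bound on $A$, obtaining
\[
\int_t^{t+1}\big(\|\Delta v\|^2+\|\nabla(\Delta d-f(d))\|^2\big)\,d\tau\leq A(t)+C\int_t^{t+1}(A^2+A)\,d\tau\leq M+C(M+1)a_1.
\]
To pass from here to $\|v\|_{H^2}^2$ and $\|d\|_{H^3}^2$ I would use the $H^2$-regularity of the Stokes operator $S$ together with \eqref{low1}, and the elliptic estimate once more via $\|\nabla\Delta d\|\leq\|\nabla(\Delta d-f(d))\|+\|\nabla f(d)\|$. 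The only nonlinear point is to bound $\|\nabla f(d)\|$, where $\nabla f(d)$ is of the form $|d|^2\nabla d$ plus lower-order terms; since the preceding step already yields $\|d\|_{H^2}\leq M_3^{1/2}$ and hence an $L^\infty$ bound on $d$ through $H^2\hookrightarrow L^\infty$ in $2D$, this quantity is bounded in $L^2$ uniformly in time and its time integral is harmless, giving \eqref{high2}.

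The step I expect to be the genuine obstacle is conceptual rather than computational: the inequality \eqref{he} is superlinear in $A$, so a naive integration would yield only local-in-time control and admit finite-time blow-up. This is precisely what the uniform Gronwall lemma circumvents, trading the missing monotonicity for the a priori $L^1_{loc}$-in-time bound on $A$ coming from the dissipative energy law. Equally important is tracking that all constants inherit dependence only on $M_1$ (and not on $R$) from Lemma \ref{ahe2d}, which is exactly what makes the resulting absorbing estimates uniform and thus useful for the construction of the global attractor.
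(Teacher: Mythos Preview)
Your proposal is correct and follows essentially the same route as the paper: apply the uniform Gronwall lemma to \eqref{he} using the integral bound \eqref{intA} on $A$, deduce a uniform bound on $A(t)$ for $t\geq t_0+1$, convert to \eqref{high1} via \eqref{low1} and \eqref{dH2}, then integrate \eqref{he} once more and use an elliptic estimate of the type \eqref{dH3} to obtain \eqref{high2}. The only cosmetic difference is in the bookkeeping of constants; the structure of the argument is identical.
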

 \begin{proof}
 It follows from Lemma \ref{ahe2d}, \eqref{intA} and the uniform Gronwall inequality that
 \be A(t+1)\leq \frac{4C_0}{\kappa}e^{\frac{2C_0}{\kappa}},\quad \forall\ t\geq t_0.\label{bdA}
 \ee
 Then by \eqref{low1} and \eqref{dH2}, there exists $M_3$ independent of $R$ such that \eqref{high1} holds.
 Besides, we infer from Lemma \ref{ahe2d} and \eqref{bdA} that
 \bea
 && \int_{t}^{t+1} (\|\Delta v(\tau)\|^2+\|\nabla(\Delta d(\tau)-f(d(\tau)))\|^2) d\tau\non\\
 &\leq&  A(t)+ C\sup_{s\in [t,t+1]}(\|A(s)\|^2+  \|A(s)\|)
 \non\\
 &\leq& M_5,\quad \forall\, t\geq t_1:=t_0+1, \label{intB}
 \eea
 where $M_5$ is independent of $R$. Since
 \bea
 \|d\|_{H^3}&\leq&
  C\left(\|\nabla \Delta d\|+\|d\|_{H^2}\right)\non\\
  &\leq& C\left(\|\nabla (\Delta d-f(d))\|+\|\nabla f(d)\|+\|d\|_{H^2}\right)\non\\
  &\leq& C\left(\|\nabla (\Delta d-f(d))\|+\|d\|_{H^2}^3+\|d\|_{H^2}\right),\label{dH3}
 \eea
 we can infer from \eqref{intB} and \eqref{high1} that \eqref{high2} holds. The proof is complete.
 \end{proof}

Hence we have

\bt The dynamical system $(V\times H^2_p, \Sigma(t))$ has a bounded
absorbing set in the phase space $V\times H_p^2$. Namely, for
any given $R>0$ and each initial data $(v_0,d_0)$ in the ball
$$\mathcal{B}_R:=\left\{(v_0,d_0)\in V\times H_p^2:
\|v_0\|_V^2+\|d_0\|_{H^2}^2\leq R\right\},$$ there exists
$\mathcal{B}_0\subset V\times H_p^2$ whose radius is
independent of $R$ such that
$(v(t),d(t))=\Sigma(t)(v_0,d_0)\in \mathcal{B}_0$ for all $t\geq t_1(R)$.
 \et

We now prove some uniform higher-order estimates for the global
solution $(v, d)$. For this purpose, we will take advantage of the
following (cf., e.g., \cite{KP}):

\bl\label{AL} When $s\geq 2$, $H^s$ is a Banach algebra. Assume that
$f, g \in H^s$. Then we have $$ \|fg\|_{H^s}\leq C\left(\|f\|_{L^\infty}\|
g\|_{H^s}+\| f\|_{H^s}\|g\|_{L^\infty}\right),$$ where the constant $C$ is
independent of $f, g$.
 \el

 \bl\label{hies}
 For any $s\in \mathbb{N}$, $s\geq 2$,  the solution $(v,d)$ satisfies the inequality
 \be
 \frac{d}{dt}\left(\|v\|_{H^s}^2+\|d\|_{H^{s+1}}^2\right)+\| v\|_{H^{s+1}}^2+\|d\|_{H^{s+2}}^2\leq
 J(t), \quad \forall\, t > 0,\label{high}
 \ee
 where $J$ is a positive function only depending on the norms
 $\|d(t)\|_{H^2}$ and $\|v(t)\|_{H^1}$ as well as on the parameter $s$.
 \el
 \begin{proof}
 Taking the $H^s$ inner-product of \eqref{1} with $v$ and adding the
 $H^{s+1}$ inner-product of  \eqref{3} with $d$, we obtain
 \bea
 && \frac12\frac{d}{dt}\left(\|v\|_{H^s}^2+\|d\|_{H^{s+1}}^2\right)+\|\nabla v\|_{H^s}^2+\|\nabla
 d\|_{H^{s+1}}^2
 \non\\
 &=& -\langle v, v\cdot\nabla v\rangle_{H^s} - \langle
v, \nabla\cdot(\nabla d \odot \nabla d)\rangle_{H^s}-\alpha \langle
v, \nabla\cdot[(\Delta d-f(d)) \otimes d]\rangle_{H^s}
 \non \\&&
 +(1-\alpha)\langle v, \nabla\cdot[d \otimes (\Delta d-f(d))
 ]\rangle_{H^s}
 - \langle v\cdot\nabla d, d\rangle_{H^{s+1}} \non\\
 &&+\alpha\langle d\cdot\nabla v, d\rangle_{H^{s+1}} -(1-\alpha)\langle d\cdot\nabla^T
 v,d\rangle_{H^{s+1}}
 - \langle f(d), d \rangle_{H^{s+1}}.\label{hse}
 \eea
 Arguing as in Proposition \ref{abs1}, we can easily show that if $v_0\in
 V$ and $d_0\in H^2$, then the uniform estimates holds
 \be
 \|v(t)\|_{H^1} + \|d(t)\|_{H^2}\leq C, \quad \forall \
 t\geq 0,\non
 \ee
 where
 $C>0$ depends on $\|v_0\|_{H^1}$ and $\|d_0\|_{H^2}$.

 Using Agmon's inequality and suitable interpolation inequalities, we have
 \bea
 \|d\|_{L^\infty} &\leq& C\|d\|^\frac12_{H^2}\|d\|^\frac12,\non\\
 \|\nabla d\|_{L^\infty}&\leq&
 C\|d\|^\frac12_{H^3}\|d\|_{H^1}^\frac12\leq C\|d\|^\frac{1}{2s}_{H^{s+2}}
 \|d\|_{H^2}^\frac{s-1}{2s}\|d\|_{H^1}^\frac12,\non\\
 \|\Delta d\|_{L^\infty}&\leq&
 C\|d\|^\frac12_{H^4}\|d\|_{H^2}^\frac12\leq C\|d\|^\frac{1}{s}_{H^{s+2}}
 \|d\|_{H^2}^\frac{s-1}{s},\non\\
 \|v\|_{L^\infty} &\leq& C\|v\|_{H^2}^\frac12\|v\|^\frac12\leq C\|v\|_{H^{s+1}}^\frac{1}{2s}\|v\|_{H^1}^\frac{s-1}{2s} \|v\|^\frac12
 \non\\
 \|\nabla v\|_{L^\infty}&\leq&
 C\|v\|^\frac12_{H^3}\|v\|_{H^1}^\frac12\leq
 C\|v\|^\frac1s_{H^{s+1}}
 \|v\|_{H^1}^\frac{s-1}{s},\non
 \\
 \|v\|_{H^s}&\leq& C\|v\|_{H^{s+1}}^\frac{s-1}{s}\|v\|_{H^1}^\frac{1}s,\non\\
 \|d\|_{H^{s+k}}&\leq&
 C\|d\|_{H^{s+2}}^\frac{s-2+k}{s}\|d\|_{H^2}^\frac{2-k}s, \quad k=1,0,-1.\non
 \eea
 Then we can apply Lemma \ref{AL} to deduce
 \bea
 && |\langle v, v\cdot\nabla v\rangle_{H^s}|\non\\
 &\leq&
 C\left(\|v\|_{L^\infty}^2\|\nabla v\|_{H^s}+\|v\|_{L^\infty}\|\nabla
 v\|_{L^\infty}\|v\|_{H^s}\right)\non\\
 &\leq&
 C\|v\|_{H^{s+1}}^\frac{s+1}{s}\|v\|_{H^1}^\frac{s-1}{s}\|v\|+C\|v\|_{H^{s+1}}^\frac{2s+1}{2s}\|v\|_{H^1}^\frac{3s-1}{2s}\|v\|^\frac12
 \non\\
  &\leq& \varepsilon\|v\|^2_{H^{s+1}}+
  C_\varepsilon\left(\|v\|_{H^1}^2\|v\|^\frac{2s}{s-1}+\|v\|_{H^1}^\frac{6s-2}{2s-1}
  \|v\|^\frac{4s}{2s-1}\right),\non
 \eea
  \bea
 && |\langle
v, \nabla\cdot(\nabla d \odot \nabla d)\rangle_{H^s}|=|\langle
\nabla v, \nabla d \odot \nabla d\rangle_{H^s}|\non\\
 &\leq& C\|\nabla v\|_{L^\infty}\|\nabla d\|_{L^\infty}\|\nabla
 d\|_{H^s}+C\|\nabla v\|_{H^s}\|\nabla d\|_{L^\infty}^2 \non\\
 &\leq&
 C\|v\|_{H^{s+1}}^\frac1s\|d\|_{H^{s+2}}^\frac{2s-1}{2s}\|v\|_{H^1}^\frac{s-1}{s}\|d\|_{H^2}^\frac{s+1}{2s}\|d\|_{H^1}^\frac12
 + C\|v\|_{H^{s+1}}\|d\|^\frac{1}{s}_{H^{s+2}}
 \|d\|_{H^2}^\frac{s-1}{s}\|d\|_{H^1}
 \non\\
 &\leq&
\varepsilon\|v\|_{H^{s+1}}^2+\varepsilon\|d\|_{H^{s+2}}^2
+C_\varepsilon\left(
\|v\|_{H^1}^\frac{4(s-1)}{2s+1}\|d\|_{H^2}^\frac{2s+2}{2s-1}\|d\|_{H^1}^\frac{2s}{2s+1}+
 \|d\|_{H^2}^2\|d\|_{H^1}^\frac{2s}{s-1}\right),\non
 \eea
 \bea
 && |\langle v\cdot\nabla d, d\rangle_{H^{s+1}}|\non\\
 &\leq& C\|v\|_{L^\infty}\|\nabla d\|_{L^\infty}\|d\|_{H^{s+1}}+C\|v\|_{L^\infty}\|\nabla
 d\|_{H^{s+1}}\|d\|_{L^\infty}\non\\
 && +C\|v\|_{H^{s+1}}\|\nabla
 d\|_{L^\infty}\|d\|_{L^\infty}\non\\
 &\leq&
 C\|v\|_{H^{s+1}}^\frac{1}{2s}\|d\|_{H^{s+2}}^\frac{2s-1}{2s}\|v\|_{H^1}^\frac{s-1}{2s}\|v\|^\frac12
 \|d\|_{H^2}^\frac{s+1}{2s}\|d\|_{H^1}^\frac12 \non\\
 && +C\|v\|_{H^{s+1}}^\frac{1}{2s}
 \|d\|_{H^{s+2}}\|v\|_{H^1}^\frac{s-1}{2s}\|v\|^\frac12\|d\|_{H^2}^\frac12\|d\|^\frac12
 \non\\
 && +C \|v\|_{H^{s+1}} \|d\|^\frac{1}{2s}_{H^{s+2}}
 \|d\|_{H^2}^\frac{2s-1}{2s}\|d\|_{H^1}^\frac12\|d\|^\frac12
 \non\\
 &\leq&
 \varepsilon\|d\|_{H^{s+2}}^2+\varepsilon\|v\|_{H^{s+1}}^2
 +C_\varepsilon\|v\|_{H^1}^\frac{s-1}{s}\|v\|
 \|d\|_{H^2}^\frac{s+1}{s}\|d\|_{H^1}\non\\
 && +C_\varepsilon
 \|v\|_{H^1}^\frac{2s-2}{2s-1}\|v\|^\frac{2s}{2s-1}\|d\|_{H^2}^\frac{2s}{2s-1}\|d\|^\frac{2s}{2s-1}+C_\varepsilon
\|d\|_{H^2}^2\|d\|_{H^1}^\frac{2s}{2s-1}\|d\|^\frac{2s}{2s-1}, \non
 \eea
 \bea
 && |\langle f(d), d \rangle_{H^{s+1}}|\non\\
 &\leq&
 C\left(\|d\|_{L^\infty}\|d\|_{H^{s+1}}+
 \|d\|^3_{L^\infty}\|d\|_{H^{s+1}}\right)\non\\
 &\leq&
 C\|d\|_{H^{s+2}}^\frac{s-1}{s}\|d\|_{H^2}^\frac{1}s(\|d\|_{H^2}^\frac12\|d\|^\frac12+\|d\|_{H^2}^\frac32\|d\|^\frac32)
 \non\\
 &\leq& \varepsilon\|d\|_{H^{s+2}}^2
 +
 C_\varepsilon\|d\|_{H^2}^\frac{2}{s+1}
 \left(\|d\|_{H^2}^\frac12\|d\|^\frac12+\|d\|_{H^2}^\frac32\|d\|^\frac32\right)^\frac{2s}{s+1}.\non
 \eea
It remains to estimate the other four terms involving the parameter
$\alpha$ on the right-hand side of \eqref{hse}. We notice that
 \bea
 &&-\alpha \langle
v, \nabla\cdot[(\Delta d-f(d)) \otimes d]\rangle_{H^s}
+\alpha\langle d\cdot\nabla v, d\rangle_{H^{s+1}} \non\\
 &=& \alpha \langle \nabla v, (\Delta d-f(d)) \otimes d\rangle_{H^s}+\alpha\langle d\cdot\nabla v,
 d\rangle_{H^{s+1}}\non\\
 &=& -\alpha \langle \nabla v, f(d)\otimes d\rangle_{H^s}+\alpha \langle \nabla v, \Delta d\otimes d\rangle_{H^s}
 +\alpha\langle d\cdot\nabla v, d\rangle_{H^{s+1}}. \label{tala}
 \eea
The first term is estimated as follows
 \bea
  && \alpha |\langle \nabla v, f(d)\otimes d\rangle_{H^s}|\non\\
  &\leq& C\|\nabla v\|_{L^\infty}(\| d\otimes d\|_{H^{s}}+\||d|^2d\otimes d\|_{H^s})+C\|\nabla
 v\|_{H^{s}}\|f(d)\otimes d\|_{L^\infty}\non\\
 &\leq& C\|d\|_{L^\infty}\|\nabla v\|_{L^\infty}\| d\|_{H^{s}}+  C\|d\|^3_{L^\infty}\|\nabla v\|_{L^\infty}\|d\|_{H^s}
 +C\|\nabla
 v\|_{H^{s}}(\|d\|^2_{L^\infty}+\|d\|_{L^\infty}^4)\non\\
  &:=& I_1+I_2+I_3,\non
  \eea
  where
  \bea
  I_1 &\leq & C\|d\|^\frac12_{H^2}\|d\|^\frac12\|v\|^\frac1s_{H^{s+1}}
 \|v\|_{H^1}^\frac{s-1}{s}\|d\|_{H^{s+2}}^\frac{s-2}{s}\|d\|_{H^2}^\frac{2}s\non\\
 &\leq&
  \varepsilon\|d\|_{H^{s+2}}^2+\varepsilon\|v\|_{H^{s+1}}^2+C_\varepsilon
  \|v\|_{H^1}^\frac{2(s-1)}{s+1}\|d\|_{H^2}^\frac{s+4}{s+1}\|d\|^\frac{s}{s+1},
  \non
  \eea
  \bea
  I_2  &\leq& C\|d\|^\frac32_{H^2}\|d\|^\frac32\|v\|^\frac1s_{H^{s+1}}
 \|v\|_{H^1}^\frac{s-1}{s}\|d\|_{H^{s+2}}^\frac{s-2}{s}\|d\|_{H^2}^\frac{2}s  \non\\
 &\leq& \varepsilon\|d\|_{H^{s+2}}^2+\varepsilon\|v\|_{H^{s+1}}^2+C_\varepsilon
  \|v\|_{H^1}^\frac{2(s-1)}{s+1}\|d\|_{H^2}^\frac{3s+4}{s+1}\|d\|^\frac{3s}{s+1},\non
  \eea
  \bea
  I_3&\leq&
  C\|v\|_{H^{s+1}}(\|d\|_{H^2}\|d\|+\|d\|^2_{H^2}\|d\|^2)\non\\
  &\leq&
  \varepsilon\|v\|_{H^{s+1}}^2+C_\varepsilon(\|d\|^2_{H^2}\|d\|^2+\|d\|^4_{H^2}\|d\|^4).\non
  \eea
 For the remaining two terms in \eqref{tala}, we have
 \bea
 && \alpha \langle \nabla v, \Delta d\otimes d\rangle_{H^s}
 +\alpha\langle d\cdot\nabla v, d\rangle_{H^{s+1}}\non\\
 &=& \alpha \langle \nabla v, \Delta d\otimes d\rangle_{H^{s-1}}+
 \alpha \sum_{|\kappa|=s} (D^\kappa \nabla v, D^\kappa(\Delta d\otimes
 d))\non\\
 && + \alpha\langle d\cdot\nabla v, d\rangle_{H^{s}}+\alpha \sum_{|\kappa|=s+1}(D^\kappa( d\cdot\nabla v), D^\kappa
 d)\non\\
 &=& \alpha \langle \nabla v, \Delta d\otimes d\rangle_{H^{s-1}}+\alpha\langle d\cdot\nabla v, d\rangle_{H^{s}}
 + \alpha \sum_{|\kappa|=s} (D^\kappa \nabla v, D^\kappa\Delta d\otimes
 d)\non\\
 && + \alpha \sum_{|\kappa|=s} \sum_{|\xi|\leq s-1,\,\kappa\geq\xi}
 C_{\kappa, \xi}
 (D^\kappa \nabla v, D^\xi\Delta d\otimes D^{\kappa-\xi} d)\non\\
 &&- \alpha \sum_{|\kappa|=s}(D^\kappa( d\cdot\nabla v),
 D^\kappa\Delta d)\non\\
 &=& \alpha \langle \nabla v, \Delta d\otimes d\rangle_{H^{s-1}}+\alpha\langle d\cdot\nabla v, d\rangle_{H^{s}}
 + \alpha \sum_{|\kappa|=s} (D^\kappa \nabla v, D^\kappa\Delta d\otimes
 d)\non\\
 && + \alpha \sum_{|\kappa|=s} \sum_{|\xi|\leq s-1, \,\kappa\geq\xi}
 C_{\kappa, \xi}
 (D^\kappa \nabla v, D^\xi\Delta d\otimes D^{\kappa-\xi} d)\non\\
 && - \alpha \sum_{|\kappa|=s}(d \cdot D^\kappa \nabla v, D^\kappa\Delta
 d)\non\\
 && - \alpha \sum_{|\kappa|=s} \sum_{|\xi|\leq s-1, \,\kappa\geq\xi }
 C_{\kappa, \xi}
 (D^{\kappa-\xi}d\cdot D^\xi \nabla v, D^\kappa\Delta
 d)\non\\
 &=& \alpha \langle \nabla v, \Delta d\otimes d\rangle_{H^{s-1}}+\alpha\langle d\cdot\nabla v, d\rangle_{H^{s}}
 \non\\
 && + \alpha \sum_{|\kappa|=s} \sum_{|\xi|\leq s-1, \,\kappa\geq\xi }
 C_{\kappa, \xi}
 (D^\kappa \nabla v, D^\xi\Delta d\otimes D^{\kappa-\xi} d)\non\\
 && - \alpha \sum_{|\kappa|=s} \sum_{|\xi|\leq s-1, \,\kappa\geq\xi }
 C_{\kappa, \xi}
 (D^{\kappa-\xi}d\cdot D^\xi \nabla v, D^\kappa\Delta
 d)\non\\
 &:=& I_4+I_5+I_6+I_7, \label{tal}
 \eea
 \bea
 I_4&\leq& C\| \nabla v\|_{L^\infty}\|\Delta d\otimes d\|_{H^{s-1}}
 +\| \nabla v\|_{H^{s-1}}\|\Delta d\otimes d\|_{L^\infty}\non\\
 &\leq& C\| \nabla v\|_{L^\infty}\|\Delta
 d\|_{L^\infty}\|d\|_{H^{s-1}}+C\| \nabla
 v\|_{L^\infty}\|d\|_{H^{s+1}}\|d\|_{L^\infty}\non\\
 && +C\|v\|_{H^{s}}\|\Delta
 d\|_{L^\infty}\|d\|_{L^\infty}\non\\
 &\leq&
 C\|v\|_{H^{s+1}}^\frac1s\|d\|_{H^{s+2}}^\frac{s-2}{s}\|v\|_{H^1}^\frac{s-1}{s}\|d\|_{H^2}^\frac{s+2}{s}
 +C\|v\|_{H^{s+1}}^\frac1s\|d\|_{H^{s+2}}^\frac{s-1}{s}\|v\|_{H^1}^\frac{s-1}{s}\|d\|_{H^2}^\frac{s+2}{2s}\|d\|^\frac12\non\\
 && +C
 \|v\|_{H^{s+1}}^\frac{s-1}s\|d\|_{H^{s+2}}^\frac{1}{s}\|v\|_{H^1}^\frac{1}{s}\|d\|_{H^2}^\frac{3s-2}{2s}\|d\|^\frac12\non\\
 &\leq& \varepsilon
 \|v\|_{H^{s+1}}^2+\varepsilon\|d\|_{H^{s+2}}^2\non\\
 && +C_\varepsilon\left(\|v\|_{H^1}^\frac{2(s-1)}{s+1}\|d\|_{H^2}^\frac{2(s+2)}{s+1}+\|v\|_{H^1}^\frac{2(s-1)}{s}\|d\|_{H^2}^\frac{s+2}{s}\|d\|
 +\|v\|_{H^1}^\frac{2}{s}\|d\|_{H^2}^\frac{3s-2}{2s}\|d\|\right),\non
 \eea
 \bea
 I_5&\leq& C\|d\|_{L^\infty}^2\|\nabla
 v\|_{H^s}+C\|d\|_{L^\infty}\|\nabla v\|_{L^\infty}\|d\|_{H^s}\non\\
 &\leq&
 C\|v\|_{H^{s+1}}\|d\|_{H^2}\|d\|
 +C\|v\|_{H^{s+1}}^\frac1s\|v\|_{H^1}^\frac{s-1}{s}\|d\|_{H^{s+2}}^\frac{s-2}{s}\|d\|_{H^2}^\frac{s+4}{2s}\|d\|^\frac12\non\\
 &\leq& \varepsilon
 \|v\|_{H^{s+1}}^2+\varepsilon\|d\|_{H^{s+2}}^2+
 C_\varepsilon\left(\|d\|_{H^2}^2\|d\|^2+
 \|v\|_{H^1}^\frac{2(s-1)}{s+1}\|d\|_{H^2}^\frac{s+4}{s+1}\|d\|^\frac{s}{s+1}
   \right),\non
 \eea
 \bea
 I_6&\leq& C  \sum_{|\kappa|=s} \sum_{|\xi|\leq s-1, \,\kappa\geq\xi }
 \|D^\kappa \nabla v\|\|D^\xi\Delta d\|_{L^4}\|D^{\kappa-\xi}
 d\|_{L^4}\non\\
 &\leq& C\|v\|_{H^{s+1}}\sum_{0\leq m\leq s-1}
 \|d\|_{W^{m+2,4}}\|d\|_{W^{s-m,4}}\non\\
 &\leq&
 C\|v\|_{H^{s+1}}\|d\|_{H^{s+2}}^\frac{s-1}{s}(\|d\|_{H^2}^\frac1s+\|d\|_{H^2})\non\\
 &\leq& \varepsilon
 \|v\|_{H^{s+1}}^2+\varepsilon\|d\|_{H^{s+2}}^2+
 C_\varepsilon\left(\|d\|_{H^2}^2+\|d\|_{H^2}^{2s}\right),\non
 \eea
 \bea
 I_7&\leq&  C \sum_{|\kappa|=s} \sum_{|\xi|\leq s-1,\, \kappa\geq\xi} \|D^\kappa \Delta d\|\|D^\xi \nabla v\|_{L^4}\|D^{\kappa-\xi}
 d\|_{L^4}\non\\
 &\leq& C\|d\|_{H^{s+2}} \sum_{0\leq m\leq
 s-1}\|v\|_{W^{m+1,4}}\|d\|_{W^{s-m,4}}\non\\
 &\leq&
 C\sum_{0\leq m\leq
 s-2}\|d\|_{H^{s+2}}^\frac{4s-2m-3}{2s}\|v\|_{H^{s+1}}^\frac{2m+1}{2s}\|d\|_{H^2}^\frac{2m+3}{2s}\|v\|_{H^1}^\frac{2s-2m-1}{2s}\non\\
 && + C\|d\|_{H^{s+2}}\|v\|_{H^{s+1}}^\frac{s-1}{s}\|v\|_{H^1}^\frac1s\|d\|_{H^2}\non\\
 &\leq& \varepsilon \|v\|_{H^{s+1}}^2+\varepsilon\|d\|_{H^{s+2}}^2+
 C_\varepsilon\left(\|d\|_{H^2}^{2s+2}+\|v\|_{H^1}^{2s+2}+\|v\|_{H^1}^2\|d\|_{H^2}^{2s}\right).\non
 \eea
  The term $$(1-\alpha)\langle v, \nabla\cdot[d \otimes (\Delta d-f(d))
 ]\rangle_{H^s}
 -(1-\alpha)\langle d\cdot\nabla^T
 v,d\rangle_{H^{s+1}}$$ can be treated exactly as in \eqref{tala} and \eqref{tal}.
On the other hand, the interpolation inequalities and Young
inequality yield that
 \be
 \|v\|_{H^s}\leq \varepsilon\|v\|_{H^{s+1}}+
 C_\varepsilon\|v\|_{H^1},\quad  \|d\|_{H^{s+1}}
 \leq \varepsilon\|d\|_{H^{s+2}}+ C_\varepsilon\|d\|_{H^2}.\non
 \ee
 Collecting all the above estimates and taking $\varepsilon$
 sufficiently small, we obtain
 \be
 \frac{d}{dt}\left(\|v\|_{H^s}^2+\|d\|_{H^{s+1}}^2\right)+\| v\|_{H^{s+1}}^2+\|d\|_{H^{s+2}}^2\leq J(t),
 \ee
 where $J(t)$ depends only on $\|v(t)\|_{H^1}$, $\|d(t)\|_{H^2}$ and
 $s$.
 \end{proof}
 \br Since $\alpha \in [0, 1]$, it is easy to realize that the
higher-order differential inequality \eqref{high} is uniform in
$\alpha$.
 \er

 \begin{proposition}
 \label{compabs}
 For each $s\in \mathbb{N}$, $s\geq 2$,
 there exist positive constants
 $M_6, M_7$ independent of $R$ such that for all
 $t_s:=t_0+s$, the following uniform estimates hold
 \be
  \|v(t)\|_{H^s}^2+\|d(t)\|^2_{H^{s+1}} \leq M_6, \quad \forall\, t\geq t_s, \label{high1a}
 \ee
 and
 \be
 \int_{t}^{t+1} \left(\|v(\tau)\|_{H^{s+1}}^2+\|d(\tau)\|_{H^{s+2}}^2\right) d\tau \leq M_7,
 \quad \forall\, t\geq t_s.\label{high2a}
 \ee
 \end{proposition}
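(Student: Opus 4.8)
The plan is to prove both \eqref{high1a} and \eqref{high2a} simultaneously by induction on $s$, driving the argument by the higher-order differential inequality \eqref{high} of Lemma \ref{hies} together with the uniform Gronwall inequality. The decisive structural feature is that the forcing term $J(t)$ in \eqref{high} depends only on $\|v(t)\|_{H^1}$ and $\|d(t)\|_{H^2}$ (and on $s$), and these are already controlled uniformly in time: by \eqref{high1} of Proposition \ref{abs1} we have $\|v(t)\|_{H^1}^2+\|d(t)\|_{H^2}^2\leq M_3$ for all $t\geq t_1$. Hence there is a constant $J_0=J_0(M_3,s)$, independent of $R$, with $J(t)\leq J_0$ for all $t\geq t_1$. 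Writing $Y_s(t):=\|v(t)\|_{H^s}^2+\|d(t)\|_{H^{s+1}}^2$ and $Z_s(t):=\|v(t)\|_{H^{s+1}}^2+\|d(t)\|_{H^{s+2}}^2$, inequality \eqref{high} reads $\frac{d}{dt}Y_s+Z_s\leq J_0$ on $[t_1,\infty)$, and crucially $Z_s=Y_{s+1}$.

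For the base case $s=2$ the integral estimate \eqref{high2} supplies $\int_t^{t+1}Y_2(\tau)\,d\tau\leq M_4$ for all $t\geq t_1$. Discarding the nonnegative term $Z_2$ leaves $\frac{d}{dt}Y_2\leq J_0$, and the uniform Gronwall inequality (with vanishing multiplicative coefficient) yields $Y_2(t)\leq M_4+J_0$ for all $t\geq t_2=t_1+1$. Reinstating $Z_2$ and integrating \eqref{high} over $[t,t+1]$ then gives $\int_t^{t+1}Z_2(\tau)\,d\tau\leq Y_2(t)+J_0$, that is, a uniform unit-interval bound on $Y_3$ valid for $t\geq t_2$. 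This establishes \eqref{high1a} and \eqref{high2a} at the level $s=2$.

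The inductive step repeats this mechanism verbatim one regularity level higher. Assuming the unit-interval bound $\int_t^{t+1}Y_s(\tau)\,d\tau\leq M_7^{(s-1)}$ holds for all $t\geq t_s$, I apply \eqref{high} at level $s$ and invoke the uniform Gronwall inequality to obtain the pointwise bound $Y_s(t)\leq M_6^{(s)}$ for $t\geq t_{s+1}=t_s+1$; integrating \eqref{high} over unit intervals then produces $\int_t^{t+1}Y_{s+1}(\tau)\,d\tau\leq Y_s(t)+J_0=:M_7^{(s)}$ for $t\geq t_{s+1}$, which is exactly the hypothesis needed to continue. Since each application of uniform Gronwall costs precisely one unit of time, the pointwise bound at level $s$ becomes available from $t_s=t_0+s$ onward, matching the statement, and all the constants entering the induction ($M_3,M_4,J_0$, hence inductively $M_6,M_7$) are independent of $R$.

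The argument is essentially routine once Lemma \ref{hies} is in hand, and I do not expect any genuine analytic obstacle here; the one point requiring care is the bookkeeping of time shifts, so that the regularity gained at each stage is usable from $t_0+s$ rather than accumulating extra delay, and the verification that the $R$-independence of $M_3,M_4$ propagates through the induction (it does, since $J_0$ depends on $M_3$ and $s$ alone). It is worth emphasizing that the reason the scheme closes so cleanly is precisely that $J$ in \eqref{high} involves only the lowest-order norms $\|v\|_{H^1},\|d\|_{H^2}$: this makes the forcing uniformly bounded from the outset and spares us from having to close a genuinely coupled hierarchy of higher-norm estimates.
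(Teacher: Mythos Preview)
Your argument is correct and follows the paper's proof essentially verbatim: both bound $J(t)$ uniformly using Proposition~\ref{abs1}, seed the induction with \eqref{high2}, and iterate uniform Gronwall plus integration of \eqref{high} to climb one regularity level and one time unit per step. One small bookkeeping slip in your inductive step: the hypothesis $\int_t^{t+1}Y_s\,d\tau\leq M_7^{(s-1)}$ should be stated for $t\geq t_{s-1}$ (this is exactly \eqref{high2a} at level $s-1$, and is what your base case delivers for $Y_3$ from $t\geq t_2$), not for $t\geq t_s$; with that correction uniform Gronwall gives the pointwise bound on $Y_s$ from $t\geq t_s$, matching both the statement and your own final paragraph.
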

\begin{proof}
 By \eqref{high1}, we can see that inequality \eqref{high} holds for $t\geq t_1$,
 with $J(t)$ being uniformly bounded by a constant $J_0=J_0(M_1, s)$ depending only on $M_1$ and $s$.
 We argue by induction on $s$.
 For $s=2$, it follows that
 \be
 \int_{t}^{t+1} J(\tau)d\tau \leq J_0(M_1, 2),\quad \forall \, t\geq
 t_1.\non
 \ee
 We conclude from  \eqref{high2} and the uniform Gronwall
 inequality that
 \be
  \|v(t+1)\|_{H^2}^2+\|d(t+1)\|^2_{H^{3}} \leq M_4+J_0(M_1,2), \quad \forall \, t\geq
  t_1.\non
 \ee
  For $t\geq t_2:=t_1+1$, integrating \eqref{high} from $t$ to
  $t+1$, we obtain
 \be
 \int_{t}^{t+1} (\|v(\tau)\|_{H^{3}}^2+\|d(\tau)\|_{H^{4}}^2) d\tau \leq M_4+2J_0(M_1,2),
 \quad \forall\, t\geq t_2:=t_1+1.\non
 \ee
 Assume that for $s=k$, we have the following uniform estimates for
 $t_k:=t_1+k-1$:
 \bea
  && \|v(t)\|_{H^k}^2+\|d(t)\|^2_{H^{k+1}} \leq K_1, \quad \forall\, t\geq t_k, \label{high1ab}
 \\
 && \int_{t}^{t+1} \left(\|v(\tau)\|_{H^{k+1}}^2+\|d(\tau)\|_{H^{k+2}}^2\right) d\tau \leq K_2,
 \quad \forall\, t\geq t_k,
 \label{high2ab}
 \eea
 where $K_1$ and $K_2$ are  constants independent of $R$.
 Then repeating the above argument, we can see that
 for $t\geq t_{k+1}:=t_1+k$,
  \bea
 && \|v(t)\|_{H^{k+1}}^2+\|d(t)\|^2_{H^{k+2}} \leq K_2+J_0(M_1, k+1), \quad \forall \, t\geq
  t_{k+1},\non
 \\
 && \int_{t}^{t+1} \left(\|v(\tau)\|_{H^{k+2}}^2+\|d(\tau)\|_{H^{k+3}}^2\right) d\tau \leq K_2+2J_0(M_1, k+1),
 \quad \forall\, t\geq  t_{k+1}.\non
 \eea
 The proof is complete.
 \end{proof}

Proposition \ref{compabs} implies the existence of a compact
absorbing set for our dynamical system:

 \bt \label{cab}  For each $s\in \mathbb{N}$, $s\geq 2$, the dynamical system $(V\times H^2_p, \Sigma(t))$
has a compact absorbing set bounded in the space $(V\cap
H_p^s)\times H_p^{s+1}$. Namely, for any given $R>0$ and each initial
data $(v_0,d_0)$ in the ball
$$\mathcal{B}_R:=\left\{(v_0,d_0)\in V\times H_p^2:
\|v_0\|_V^2+\|d_0\|_{H^2}^2\leq R\right\},$$ there exists
$\mathcal{B}_1\subset (V\cap H_p^s)\times H_p^{s+1}$ whose radius is
independent of $R$ such that
$$(v(t),d(t))=\Sigma(t)(v_0,d_0)\in \mathcal{B}_1, \quad \text{for all}\  t\geq t_s(R).$$
 \et

\section{Finite-dimensional Global Attractor}
\label{attr}
\setcounter{equation}{0}

To prove the existence of the global attractor, we make use of the
following abstract result (cf. \cite[Corollary
6]{PZ07})
 \bl
 Let the closed semigroup $\Sigma(t)$ has a connect compact
 attracting set $\mathcal{K}$. Assume also that
 $\Sigma(t)\mathcal{K}\subset \mathcal{K}$ for every $t$ sufficient
 large. Then $\Sigma(t)$ has a connected global attractor $\mathcal{A}$.
 \el
 Owing to the above lemma, we can take $\mathcal{K}=\mathcal{B}_1$ as in Theorem
 \ref{cab} and obtain the existence of the global attractor $\mathcal{A}$. Its boundedness
 properties follow from Proposition \ref{compabs}.

 It remains to prove that the attractor $\mathcal{A}$ has finite
 fractal dimension. First, we recall the definition of box-counting dimension.
 \bd
Let $X$ be a (relatively) compact subset of a metric space $E$. For a given
$\epsilon > 0$, let $N_\epsilon(X)$ be
the minimal number of balls of radius $\epsilon$ that are necessary
to cover $X$. Denote the Kolmogorov $\epsilon$-entropy of $X$ in $E$
by $\mathcal{H}_\epsilon(X)= {\rm log}_2 N_\epsilon(X)$. Then the
fractal dimension of $X$ is the quantity ${\rm dim}_F X :=
\limsup_{\epsilon\to 0}\frac{ \mathcal{H}_\epsilon(X)}{ {\rm log}_2
\epsilon^{-1}}$.
 \ed
Also, we report a general result that ensures the finite
fractal dimensionality of a compact set, namely (cf., e.g., \cite[Theorem
4.1]{Z00}),
 \bl\label{ABSF}
  Let $X$ be a compact subset of Banach space $E$. We assume that there exist a Banach
space $E_1$ such that $E_1$ is compactly embedded into $E$ and a
mapping $L: X\to X$ such that $L(X) = X$ and
 \be \|L(x_1)-L(x_2)\|_{E_1} \leq c\|x_1- x_2\|_E, \quad \forall \ x_1, x_2\in
 X. \non
 \ee
 Then the fractal dimension of $X$ is finite and satisfies
 $${\rm dim}_F X \leq \mathcal{H}_{\frac{1}{4c}} (B_{E_1}(0, 1)),$$
 where $B_{E_1}(0, 1)$ is the unit ball at origin in
 $E_1$.
 \el

We now prove the following smoothing property:

 \bl
 For any given $(v_{0i}, d_{0i})\in \mathcal{A}$, $i=1,2$, the corresponding
 complete bounded trajectories $(v_{i},d_{i})$ satisfy the estimate
  \be
  \|(v_1-v_2)(1)\|_{H^2}^2+\|(d_1-d_2)(1)\|_{H^{3}}^2
   \leq C\left(\|v_{01}-v_{02}\|_{H^1}^2+\|d_{01} - d_{02}\|_{H^{2}}^2\right),\non
 \ee
 where $C$  is a constant depending on $\|v_{i0}\|_{H^{3}}$ and
 $\|d_{0i}\|_{H^{4}}$.
 \el
\begin{proof}
 Denote
\begin{eqnarray}
\bar v=v_1-v_2, \ \  \bar d=d_1-d_2, \ \ \bar v_0=v_{01}-v_{02}, \ \
\bar d_0=d_{01}-d_{02}.
\end{eqnarray}
Since $(v_i,d_i)$ are solutions to problem \eqref{1}--\eqref{5}, we
have
 \bea
 && v_{1t}+v_1\cdot\nabla v_1-\nu \Delta v_1+\nabla P_1\non\\
 &&\ =-
 \nabla\cdot[\nabla d_1\odot\nabla d_1+\alpha(\Delta d_1-f(d_1))\otimes d_1-(1-\alpha)d_1\otimes (\Delta d_1-f(d_1))],\label{1 for the first}\\
 && \nabla \cdot v_1 = 0,\label{2 for the first}\\
 && d_{1t}+v_1\cdot\nabla d_1-\alpha  (\nabla v_1) d_1+(1-\alpha)(\nabla^T v_1) d_1=\Delta d_1-f(d_1),\label{3 for the first}
 \\
 && v_{2t}+v_2\cdot\nabla v_2-\nu \Delta v_2+\nabla P_2\non\\
 &&\ =-\nabla\cdot[\nabla d_2\odot\nabla d_2+\alpha(\Delta d_2-f(d_2))\otimes d_2-(1-\alpha)d_2\otimes (\Delta d_2-f(d_2))],\label{1 for the second}\\
 && \nabla \cdot v_2 = 0,\label{2 for the second}\\
 && d_{2t}+v_2\cdot\nabla d_2-\alpha  (\nabla v_2) d_2+(1-\alpha)(\nabla^T v_2) d_2=\Delta d_2-f(d_2),
 \label{3 for the second}
 \eea
in $Q\times\mathbb{R}$. For $s\geq 1$, taking the $H^s$-inner product of $\bar v$ with the equation obtained
by subtracting \eqref{1 for the second} from
\eqref{1 for the first} and the $H^{s+1}$-inner product of $ \bar d$
with the equation obtained by subtracting \eqref{3 for the second} from \eqref{3 for
the first}, respectively, adding the two resulting equations together, we
obtain
\begin{eqnarray}
&& \frac{1}{2}\frac{d}{dt}\left(\|\bar v\|_{H^s}^2+\|\bar
    d\|_{H^{s+1}}^2
    \right)+\|\nabla \bar v\|_{H^s}^2+\|\nabla \bar d\|_{H^{s+1}}^2\nonumber\\
&=& -\langle v_2\cdot\nabla \bar v, \bar v\rangle_{H^s}
    -\langle\bar v\cdot \nabla v_1, \bar v\rangle_{H^s}
    -\langle \Delta \bar d\cdot \nabla d_1, \bar v\rangle_{H^s} \non\\
&&  -\langle \Delta d_2\cdot \nabla \bar d, \bar v\rangle_{H^s} 
    +\alpha \langle\Delta \bar d \otimes d_1, \nabla \bar
    v\rangle_{H^s} 
    +\alpha \langle\Delta d_2 \otimes \bar d, \nabla \bar
    v\rangle_{H^s}\non\\
&&  -\alpha\langle(f(d_1)-f(d_2))\otimes d_1, \nabla \bar
    v\rangle_{H^s}
    -\alpha\langle f(d_2)\otimes \bar d,\nabla \bar
    v\rangle_{H^s}\non\\
&&  -(1-\alpha)\langle d_1 \otimes \Delta \bar d, \nabla
    \bar v\rangle_{H^s} 
    -(1-\alpha)\langle\bar d \otimes \Delta d_2, \nabla
    \bar v\rangle_{H^s}\non\\
&&  +(1-\alpha)\langle d_1 \otimes (f(d_1)-f(d_2)),
    \nabla \bar
    v\rangle_{H^s}
    +(1-\alpha)\langle\bar d \otimes f(d_2),\nabla \bar
    v\rangle_{H^s}\non\\
&&  -\langle f(d_1)-f(d_2),  \bar d\rangle_{H^{s+1}}
    -\langle\bar v\cdot\nabla d_1 , \bar d\rangle_{H^{s+1}}
    -\langle v_2\cdot\nabla \bar d,  \bar d\rangle_{H^{s+1}}\non\\
&&  +\alpha\langle(\nabla \bar v) d_1,\bar
    d\rangle_{H^{s+1}}
    +\alpha\langle(\nabla v_2)\bar d, \bar
    d\rangle_{H^{s+1}}\non\\
&&  -(1-\alpha)\langle(\nabla^T \bar v) d_1,\bar
    d\rangle_{H^{s+1}}
    -(1-\alpha)\langle(\nabla^T v_2)\bar d, \bar
    d\rangle_{H^{s+1}}.
\label{eni}
\end{eqnarray}
 Since $\mathcal{A}\subset (C^\infty(Q))^2$, then $ (v_{0i},
d_{0i})\in H^s\times H^{s+1}$ for any $s\geq 2$. We infer from
\eqref{high} that
 \be
 \|v_i(t)\|_{H^s}^2+\|d_i(t)\|_{H^{s+1}}^2\leq
 \left(\|v_{0i}\|_{H^s}^2+\|d_{0i}\|_{H^{s+1}}^2\right)e^{-t}+J,\quad \forall\,
 t\geq 0,\non
 \ee
 where $J$ depends only on $\|v_i\|_{H^1}$, $\|d_i\|_{H^2}$ that are
 uniformly bounded by Lemma \ref{he2dd}.
 As a result, we have uniform-in-time estimates of the Sobolev
 norms of $(v_i, d_i)$ of any order $s\in \mathbb{N}$. Using these higher-order estimates, it is not
 difficult to bound the right-hand side of \eqref{eni} as in Lemma
 \ref{hies} (actually much simpler). We have
 \be
 \text{r.h.s of \eqref{eni}} \leq \varepsilon \|\bar
 v\|^2_{H^{s+1}}+\varepsilon\|\bar d\|_{H^{s+2}}^2+ C_\varepsilon \left(\|\bar
 v\|^2_{H^{s}}+\|\bar d\|_{H^{s+1}}^2\right).\non
 \ee
 Choosing  $\varepsilon=\frac12$, we obtain that
 \be
 \frac{d}{dt}\left(\|\bar v\|_{H^s}^2+\|\bar
   d\|_{H^{s+1}}^2
    \right)+\| \bar v\|_{H^{s+1}}^2+\| \bar
    d\|_{H^{s+2}}^2\leq K(s)\left(\|\bar v\|_{H^s}^2+\|\bar
   d\|_{H^{s+1}}^2\right).\label{diff}
 \ee
  where $K(s)$ is a constant depending on $\|v_{0i}\|_{H^{s+1}}$ and
 $\|d_{0i}\|_{H^{s+2}}$ at most.

 Taking $s=1$ in \eqref{diff}, it follows from the Gronwall inequality that
 \be
  \| \bar v(t)\|_{H^{1}}^2+\| \bar
    d(t)\|_{H^{2}}^2\leq e^{K(1)t} \left(\|\bar v_0\|_{H^1}^2+\|\bar
   d_0\|_{H^{2}}^2\right),\quad \forall\, t\geq 0,\non
   \ee
   which implies
   \be
  \int_0^1(\| \bar v(t)\|_{H^{2}}^2+\| \bar
    d(t)\|_{H^{3}}^2)dt\leq \left(1+e^{K(1)}\right)\left(\|\bar v_0\|_{H^1}^2+\|\bar
   d_0\|_{H^{2}}^2\right).\non
 \ee
 Next, taking $s=2$, multiplying  \eqref{diff} by $t$ and integrating in
 time from $0$ to $1$, we obtain
 \bea
 && \|\bar v(1)\|_{H^2}^2+\|\bar
   d(1)\|_{H^{3}}^2
   \non\\
   &\leq& \int_0^1\left(\| \bar v(t)\|_{H^{2}}^2+\| \bar
    d(t)\|_{H^{3}}^2\right)dt+K(2) \int_0^1 t\left(\| \bar v(t)\|_{H^{2}}^2+\| \bar
    d(t)\|_{H^{3}}^2\right)dt\non\\
    &\leq& (1+K(2))\left(1+e^{K(1)}\right)\left(\|\bar v_0\|_{H^1}^2+\|\bar
   d_0\|_{H^{2}}^2\right).\non
 \eea
The proof is complete.
\end{proof}

 Taking $$X=\mathcal{A},\quad E=V\times
 H_p^2,\quad E_1=(V\cap H_p^2)\times
 H_p^3,\quad L=\Sigma(1),$$
  we can apply Lemma \ref{ABSF} to conclude
 that the global attractor $\mathcal{A}$ has finite fractal
 dimension in the $H^1 \times H^2$-metric.
 The proof of Theorem \ref{main} is now complete.


\bigskip
\noindent\textbf{Acknowledgments.} This work originated from a visit
of the first author to the Fudan University whose hospitality is
gratefully acknowledged. The first author has also been partially
supported by the the Italian MIUR-PRIN Research Project 2008 {\it
Transizioni di fase, isteresi e scale multiple}. The second author
was partially supported by NSF of China 11001058, NSF of Shanghai
10ZR1403800 and "Chen Guang" project supported by Shanghai
Municipal Education Commission and Shanghai Education Development
Foundation.
\bigskip


\begin{thebibliography}{99}
\itemsep=0pt

\bibitem{B} S. Bosia, Well-posedness and long term behavior of a
    simplified Ericksen--Leslie non-autonomous system for nematic
    liquid crystal flow, Commun. Pure Appl. Anal., to appear.

\bibitem{CR} C. Cavaterra and E. Rocca, On a $3D$ isothermal
    model for nematic liquid crystals accounting for stretching terms,
    preprint arXiv:1107.3947v1 (2011), 1--14.

\bibitem{CGR06} B. Climent-Ezquerra, F. Guillen-Gonz\'{a}lez and M.
    Rojas-Medar, Reproductivity for a nematic liquid crystal model, Z.
    Angew. Math. Phys., \textbf{57} (2006), 984--998.

\bibitem{CGM09} B. Climent-Ezquerra, F. Guillen-Gonz\'{a}lez
    and M.J.
    Moreno-Iraberte, Regularity and time-periodicity for a nematic
    liquid crystal model, Nonlinear Anal., \textbf{71} (2009), 530--549.

\bibitem{CS} D. Coutand and S. Shkoller, Well-posedness of the full
    Ericksen--Leslie model of nematic liquid crystals, C. R. Acad. Sci.
    Paris Ser. I Math., \textbf{333} (2001), 919--924.

\bibitem{dG} P.G. de Gennes and J. Prost, \textit{The physics of liquid crystals}, second edition, Oxford
Science Publications, Oxford, 1993.

\bibitem{E61} J.L. Ericksen, Conservation laws for liquid crystals,
    Trans.
    Soc. Rheol., \textbf{5} (1961), 22--34.

\bibitem{E91} J.L. Ericksen, Liquid crystals with variable degree of
    orientation, Arch. Ration. Mech. Anal., \textbf{113} (1991),
    97--120.

\bibitem{FO09} J. Fan and T. Ozawa, Regularity criteria
    for a simplified Ericksen--Leslie system modeling the flow of liquid crystals,
Discrete Contin. Dyn. Syst., \textbf{25} (2009), 859--867.

\bibitem{GRR09} F. Guillen-Gonz\'{a}lez,
    M. A. Rodr\'{\i}guez-Bellido and M. A. Rojas-Medar,
Sufficient conditions for regularity and uniqueness of a 3D nematic
liquid crystal model, Math. Nachr., \textbf{282} (2009), 846--867.


\bibitem{HW09} X. Hu and D. Wang, Global solution to the
    three-dimensional incompressible flow of liquid crystals,
    Commun. Math. Phys., \textbf{296} (2010), 861--880.


\bibitem{KP} T. Kato and G. Ponce, Commutator estimates and the Euler and
Navier--Stokes equations, Comm. Pure Appl. Math., \textbf{41}
(1988), 891--907.

\bibitem{Le79} F.M. Leslie, Theory of flow phenomena in liquid
    crystals, in Advances in Liquid Crystals,
    Vol. 4, 1--81, Academic Press, New York, 1979.

\bibitem{LL95} F.-H. Lin and C. Liu, Nonparabolic dissipative system
    modeling the flow of liquid crystals, Comm. Pure Appl. Math.,
    \textbf{48} (1995), 501--537.

\bibitem{LL01} F.-H. Lin and C. Liu, Existence of solutions for the
    Ericksen--Leslie system, Arch. Ration. Mech. Anal., \textbf{154}
    (2000), 135--156.

\bibitem{LLZ07} P. Lin, C. Liu and H. Zhang, An energy law preserving
    $C^0$ finite element scheme for simulating the kinematic effects in
    liquid crystal dynamics, J. Comput. Phys., \textbf{227} (2007),
    1411--1427.

\bibitem{LS01} C. Liu and J. Shen, On liquid crystal flows with
     free-slip boundary conditions, Discrete Contin.
Dyn. Syst., \textbf{7} (2001), 307--318.

\bibitem{LWa00} C. Liu and N.J. Walkington, Approximation of liquid
    crystal flows, SIAM J. Numerical Anal., \textbf{37} (2000), 725--741.


\bibitem{LWa02} C. Liu and N.J. Walkington, Mixed methods for the
    approximation of liquid crystal flows, M2AN Math. Model.
    Numer. Anal., \textbf{36} (2002), 205--222.

\bibitem{PZ07} V. Pata and S. Zelik, A result on the existence of
    global
    attractors for semigroups of closed operators,
    Commun. Pure Appl. Anal., \textbf{6} (2007), 481--486.

\bibitem{S01} S. Shkoller, Well-posedness and global attractors for
    liquid
    crystals on Riemannian manifolds, Comm. Partial Differential
    Equations, \textbf{27} (2001), 1103--1137.

\bibitem{SL08} H. Sun and C. Liu, On energetic variational approaches
    in modeling the nematic liquid crystal flows, Discrete Contin.
Dyn. Syst., \textbf{23} (2009), 455--475.

\bibitem{Te} R. Temam, \emph{Navier--Stokes equations and nonlinear functional analysis},
Second edition, CBMS-NSF Reg. Conf. Ser. Appl. Math., \textbf{66}, SIAM, Philadelphia, PA, 1995.

\bibitem{LW08} H. Wu, Long-time behavior for nonlinear
    hydrodynamic system modeling the nematic liquid crystal flows,
    Discrete Contin. Dyn. Syst., \textbf{26}
    (2010), 379--396.

\bibitem{WXL1} H. Wu, X. Xu and C. Liu, Asymptotic behavior for a nematic liquid crystal
model with different kinematic transport properties, Calc. Var. Partial Differential Equations, 2011, online first, DOI 10.1007/s00526-011-0460-5.

\bibitem{WXL2} H. Wu, X. Xu and C. Liu, On the general
Ericksen--Leslie system: Parodi's relation, well-posedness and
stability, accepted by Arch. Rational Mech. Anal. 2012, arXiv:1105.2180 (2011)

\bibitem{Z00} S. Zelik,
The attractor for a nonlinear reaction--diffusion system with a
supercritical nonlinearity and its dimension, Rend. Accad. Naz. Sci.
XL Mem. Mat. Appl., \textbf{24} (2000), 1--25.

\bibitem{Z04} S. Zheng, \emph{Nonlinear evolution equations},
    Pitman series
Monographs and Survey in Pure and Applied Mathematics,
\textbf{133}, Chapman \& Hall/CRC, Boca Raton, FL, 2004.

\end{thebibliography}
\end{document}